\documentclass{article} 
\usepackage{amsmath,amsfonts}
\usepackage{algorithmic}
\usepackage{array}
\usepackage[caption=false,font=normalsize,labelfont=sf,textfont=sf]{subfig}
\usepackage{textcomp}
\usepackage{stfloats}
\usepackage{url}
\usepackage{verbatim}
\usepackage{graphicx}
\hyphenation{op-tical net-works semi-conduc-tor IEEE-Xplore}
\def\BibTeX{{\rm B\kern-.05em{\sc i\kern-.025em b}\kern-.08em
    T\kern-.1667em\lower.7ex\hbox{E}\kern-.125emX}}
\usepackage{balance}
\usepackage[utf8]{inputenc}
\usepackage[T1]{fontenc}
\usepackage[rightcaption]{sidecap}

\usepackage[inner=30mm,outer=30mm,tmargin=30mm,bmargin=30mm]{geometry}

\geometry{a4paper}
\usepackage{latexsym,amsfonts,amsmath,amssymb,epsfig,tabularx,amsthm,dsfont,mathrsfs}
\usepackage{graphicx}
\usepackage{enumerate}
\usepackage{bbm} 
\usepackage{tikz}
\usepackage{verbatim}
\usetikzlibrary{arrows}
\usetikzlibrary{plotmarks}


\usepackage[colorlinks=false]{hyperref} 
\hypersetup{linkbordercolor=red,
        linkcolor=black,
        citecolor=black,
        urlcolor=blue}
\usepackage{caption}

\usepackage{subfig}
				
\usepackage{cprotect} 

\usepackage{caption} 
\usepackage{multirow}
\usepackage{cprotect} 
\theoremstyle{plain}
\usepackage{dsfont} 

\usepackage{amsfonts,mathrsfs,amssymb}
\usepackage{graphicx}
\usepackage{epstopdf}
\usepackage{algorithmic}
 \ifpdf
   \DeclareGraphicsExtensions{.eps,.pdf,.png,.jpg}
 \else
   \DeclareGraphicsExtensions{.eps}
 \fi

\setlength {\marginparwidth }{2cm}
\usepackage{todonotes}

\usepackage{enumerate} 

\usepackage{longtable}

\usepackage{bbm}
\usepackage{bibentry} 
\makeatletter\let\saved@bibitem\@bibitem\makeatother

\usepackage[colorlinks=false]{hyperref} 
\hypersetup{linkbordercolor=red,
        linkcolor=black,
        citecolor=black,
        urlcolor=blue}
\makeatletter\let\@bibitem\saved@bibitem\makeatother

\usepackage{tikz}
\usepackage{verbatim}
\usetikzlibrary{arrows}
\usetikzlibrary{plotmarks}
\usetikzlibrary{shapes.geometric}
\usetikzlibrary{positioning}
\usetikzlibrary{patterns}
\usetikzlibrary{calc,math}
\usetikzlibrary{fadings}

\tikzfading[name=fade out,
inner color=transparent!0,
outer color=transparent!100]

\usepackage{listings} 
  \lstset{language=matlab,showstringspaces=false,basicstyle={\ttfamily}}
\usepackage{cleveref}

\usepackage{xifthen}

\usepackage{bm}

\usepackage{mathtools}

\usepackage{datetime}
\newdate{date}{\today} 
\date{\displaydate{data}}



\newcommand{\diff}{\mathup{d}} 





\DeclareMathOperator{\supp}{supp}






\DeclareMathOperator{\diag}{diag}







\DeclareMathAlphabet{\mathup}{OT1}{\familydefault}{m}{n}


\newcommand{\sep}{\mathup{sep}}

\newcommand{\ii}{\mathrm{i}}
\newcommand{\e}{\mbox{e}}
\newcommand{\eip}[1]{{\e}^{ 2\pi{\ii} #1}}
\newcommand{\eim}[1]{{\e}^{-2\pi{\ii} #1}}

\newcommand{\N}{\ensuremath{\mathbb{N}}}

\newcommand{\T}{\ensuremath{\mathbb{T}}}

\newcommand{\Z}{\ensuremath{\mathbb{Z}}}

\newcommand{\R}{\ensuremath{\mathbb{R}}}
\newcommand{\C}{\ensuremath{\mathbb{C}}}

\newcommand{\norm}[2][]{\ifthenelse{\isempty{#1}}%
  {\left\Vert #2\right\Vert}%
  {\left\Vert #2\right\Vert_{#1}}}






\newcommand{\dirich}[1]{\ifthenelse{\isempty{#1}}
	{d_n}
	{d_n\left(#1\right)} }		
\newcommand{\dirichd}[1]{\ifthenelse{\isempty{#1}}
	{d_n'}
	{d_n'\left(#1\right)} }	
\newcommand{\dirichdd}[1]{\ifthenelse{\isempty{#1}}
	{d_n''}
	{d_n''\left(#1\right)} }	
\newcommand{\dirichm}[1]{\ifthenelse{\isempty{#1}}
	{\tilde d_n}
	{\tilde d_n\left(#1\right)} }

%





\renewcommand{\mathbf}[1]{\ensuremath{\boldsymbol{#1}}}





\definecolor{mycolor}{rgb}{0,0,0}

\newtheorem{thm}{Theorem}[section]
\newtheorem{lemma}[thm]{Lemma}
\newtheorem{corollary}[thm]{Corollary}
\newtheorem{proposition}[thm]{Proposition}
\theoremstyle{definition}

\newtheorem{definition}[thm]{Definition}

\crefname{lemma}{Lemma}{Lemmata}
\crefname{proposition}{Proposition}{Propositions}
\crefname{definition}{Definition}{Definitions}
\crefname{theorem}{Theorem}{Theorems}
\crefname{thm}{Theorem}{Theorems}
\crefname{corollary}{Corollary}{Corollaries}
\crefname{equation}{}{}
\crefname{remark}{Remark}{Remarks}
\crefname{algorithm}{Algorithm}{Algorithms}
\crefname{chapter}{Chapter}{Chapters}
\crefname{section}{Section}{Sections}
\crefname{table}{Table}{Tables}
\crefname{figure}{Figure}{Figures}
\crefname{example}{Example}{Examples}
\crefname{appendix}{Appendix}{Appendices}
\crefname{subsection}{Subsection}{Subsections} 

\begin{document}
\title{Analysis of the sparse super resolution limit using the Cram\'{e}r-Rao lower bound}
\author{Mathias Hockmann\thanks{
The author gratefully acknowledges support by the DFG within the Collaborative Research Center 944 
``Physiology and dynamics of cellular microcompartments’’.\\
Institute of Mathematics at Osnabr\"uck University, Germany, \\
email: \texttt{mahockmann@uos.de}}
}
\markboth{to be filled by the editor}
{to be filled by the editor}

\maketitle

\begin{abstract}
Already since the work by Abbe and Rayleigh the difficulty of \textit{super resolution} where one wants to recover a collection of point sources from low-resolved microscopy measurements is thought to be dependent on whether the distance between the sources is below or above a certain \textit{resolution} or \textit{diffraction limit}. Even though there has been a number of approaches to define this limit more rigorously, there is still a gap between the situation where the task is known to be hard and scenarios where the task is provably simpler. For instance, an interesting approach for the univariate case using the size of the \textit{Cram\'{e}r-Rao lower bound} was introduced in a recent work by Ferreira Da Costa and Mitra. In this paper, we prove their conjecture on the transition point between good and worse tractability of super resolution and extend it to higher dimensions. Specifically, the bivariate statistical analysis allows to link the findings by the Cram\'{e}r-Rao lower bound to the classical Rayleigh limit.       
\end{abstract}


\medskip
\noindent\textit{Key words and phrases}:
Super resolution, resolution limit, minorant function, Cram\'{e}r-Rao lower bound



	
	

\section{Introduction} 

Super resolution (SR) as the task to recover a signal from bandlimited information is well-studied and has applications in many inverse problems including microscopy, e.g.\,see \cite{Laville_21,Ovesny_14,Rust_2006,Ingerman19}. Classically, the signal is assumed to be a convolution of a sparse measure of interest $\mu$ and a bandlimited \textit{point spread function} (PSF) such that we model this as an ideal low pass filter yielding an estimate for the low order Fourier coefficients of $\mu$. Given these estimates on the Fourier coefficients, the actual SR-problem is then to find the parameters of the discrete measure $\mu$ consisting of node positions $t$ and weights $\alpha_t$ in the representation
\begin{align*}
    \mu=\sum_{t\in Y} \alpha_t \delta_t
\end{align*}
where $\delta_t$ is the \textit{Dirac measure} at $t$ and $Y$ some finite set, cf.\,\cite{Laville_21}. 

While there exist many algorithms for this problem using variational techniques \cite{Fernanadez_16,candes13,Laville_21}, subspace methods beginning with Prony \cite{Prony_1795} or more recently machine learning approaches \cite{Nehme_18,Nehme_20,Speiser_20}, there has been a lot of work on stability analysis of specific algorithms e.g.\,cf.\,\cite{Fernanadez_16,candes13,Li_20,liao14,Aubel_16,Potts_17,sahnoun17,Fan2022}. In order to select an optimal algorithm it is then natural to ask how much instability of an algorithm is caused by the problem itself. In other words, one wants to understand the \textit{condition} of SR and we refer to \cite{Liu_21_line,Batenkov_2020,Chen_21,Eftekhari_21a,DaCosta_22} for some results from the literature. Among these, we highlight the recent result by Ferreira Da Costa and Mitra \cite{DaCosta_22} which estimates the condition of SR by the \textit{Cram\'{e}r-Rao} (CR) lower bound and allows to conclude well-conditionedness in the univariate case if the distance of support nodes $q$ and the bandlimit parameter $n$ admit $(2n+1)\cdot q >3.54$. The introduction of a new multivariate Beurling-Selberg type minorant by the author of this work in \cite{Ho_23b} allows to sharpen this result on the resolution limit in an optimal way and to extend it to higher dimensions.

\subsection{Contributions and outline of the paper}
We prove the conjecture from \cite[p.\,1741]{DaCosta_22} by showing that univariate SR is well-conditioned in the sense of the size of its Cram\'{e}r-Rao lower bound if the separation fulfills the sharp bound $n\cdot q>1$. Together with a multivariate extension, this is formulated in the main result given by \cref{Cor_condition_CR}. Its most important implication is that it strongly justifies a resolution limit, known historically as the \textit{Rayleigh limit} \cite{Rayleigh_1879}, as the point where SR transitions from well- to ill-conditionedness.\par
\Cref{sec_main_results} containing the main results of this paper is divided into three subsections. At first, we compute the Cram\'{e}r-Rao lower bound of the super resolution problem and define our notion of its condition by the size of the smallest singular value of the \textit{Fisher information matrix}. In order to prove the main technical result \cref{Prop_condition_block_Vandermonde}, we recapitulate properties of our minorant function as introduced in \cite{Ho_23b}. Finally, the proof of \cref{Prop_condition_block_Vandermonde} is given in \cref{Subsec:proof}.

\subsection{Notation}

For a matrix $A$ (or a vector), we denote its complex conjugate by $A^*$ and its transpose by $A^\top$. If $A,B$ are two Hermitian matrices such that $A-B$ is positive semidefinite, we write $A\succeq B$. Additionally, a diagonal matrix $A$ with vector $v$ on its main diagonal and zero elsewhere is rewritten as $A=\diag(v)$. The smallest singular value of $A$ or its smallest eigenvalue (if $A$ is Hermitian) is denoted by $\sigma_{\min}(A)$ or $\lambda_{\min}(A)$ respectively. The norm $\|\cdot \|_2$ is the standard Euclidean norm for vectors and the corresponding operator norm for matrices. \par
Apart from this notation for objects from linear algebra, the torus as the periodic unit interval is $\T:=\R/\Z$ and $|Y|$ for a finite set $Y$ means the cardinality of the set. For a finite set $Y\subset \T^d$, its (minimal) separation is defined as
\begin{align*}
    q=\sep\,Y:=\min_{t,t'\in Y} \min_{\ell\in\Z^d} \|t-t'+\ell\|_2.
\end{align*}
Finally, $j_{\nu,1}$ stands for the first positive zero of the Bessel function of the first kind with order $\nu$ which is denoted by $J_{\nu}$, cf.\,\cite{Watson_1944}.

\section{Main results} \label{sec_main_results}

\subsection{Cram\'{e}r-Rao lower bound}

The \textit{Cram\'{e}r-Rao (CR) lower bound} estimates that the covariance of each unbiased estimator $\hat\theta$ for a vector of parameters $\theta$ can be bounded from below by the inverse of the \textit{Fisher information matrix} $J(\theta)$. We summarise known results about the CR lower bound and the Fisher information matrix (FIM) in the following theorem.
\begin{thm}(CR and Fisher information, cf.\,\cite[p.\,6424]{Pakrooh_15}) \label{thm_Fisher}
Assume that a random vector $y\in\C^m$ has probability density function $f(y,\theta)$ depending on some unknown, deterministic parameter $\theta\in \R^{l}$ for some $l\in\N$. Then, the \textit{Fisher information matrix} defined as\footnote{We emphasise that the expectation is computed with respect to the random $y$ by the subscript $\mathbbm{E}_y$.} 
\begin{align*}
    \scalebox{0.99}{$J(\theta)=\mathbbm{E}_y\left[\left(\frac{\partial \log f(y,\theta)}{\partial \theta}\right)\left(\frac{\partial \log f(y,\theta)}{\partial \theta}\right)^*\right] \in \C^{l\times l}$}
\end{align*}
satisfies the covariance estimate $$\mathbbm{E}_y\left[(\hat\theta(y)-\theta)(\hat\theta(y)-\theta)^*\right]\succeq J(\theta)^{-1}$$ for any unbiased estimator $\hat\theta$. If $y$ follows a multivariate complex normal distribution with mean $x(\theta)\in\C^m$ and diagonal covariance matrix $\delta^2 I$ for some $\delta>0$, i.e. $y\sim \mathcal{C}\mathcal{N}(x(\theta),\delta^2 I) $, then the Fisher information matrix can be calculated as 
\begin{align*}
    J(\theta)&=\delta^{-2} G^* G, \\
    \text{where } G&=\left[\frac{\partial x(\theta)}{\partial \theta_1}, \dots, \frac{\partial x(\theta)}{\partial \theta_l}\right] \in\C^{m\times l}.
\end{align*}
\end{thm}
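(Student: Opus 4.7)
The statement has two independent pieces: the general Cram\'{e}r-Rao covariance bound, and the explicit formula for the Fisher information matrix under a complex Gaussian observation model. The plan is to prove them in turn, relying only on differentiation under the integral sign and standard second-moment calculations.

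For the general CR bound, I would introduce the score function $s(y,\theta):=\partial\log f(y,\theta)/\partial\theta\in\R^l$. Differentiating the normalisation $\int f(y,\theta)\,dy=1$ under the integral gives $\expect_y[s(y,\theta)]=0$, so $J(\theta)$ is literally the covariance of $s$. Differentiating the unbiasedness identity $\expect_y[\hat\theta(y)]=\theta$ in the same fashion shows $\expect_y\bigl[(\hat\theta(y)-\theta)\,s(y,\theta)^*\bigr]=I_l$. Stacking $\hat\theta-\theta$ and $s$ into a single block column, the joint second-moment matrix
\[
\begin{pmatrix}\expect_y[(\hat\theta-\theta)(\hat\theta-\theta)^*] & I_l\\ I_l & J(\theta)\end{pmatrix}\succeq 0
\]
is positive semidefinite. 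Assuming $J(\theta)$ is invertible, which is the only non-vacuous case, the Schur complement with respect to the $(2,2)$-block delivers exactly $\expect_y[(\hat\theta-\theta)(\hat\theta-\theta)^*]\succeq J(\theta)^{-1}$.

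For the explicit Gaussian formula, I would substitute the density of $\mathcal{CN}(x(\theta),\delta^2 I)$ into the definition of $J(\theta)$. With $z:=y-x(\theta)$, the log-density is $\log f=-m\log(\pi\delta^2)-\delta^{-2}z^*z$, so differentiation in the real parameter $\theta_k$ yields $\partial_k\log f=\delta^{-2}\bigl((\partial_k x)^*z+z^*\partial_k x\bigr)$. The $(k,j)$ entry of $J$ then becomes a linear combination of four quadratic expressions in $z$; the two \emph{improper} terms involving $\expect[zz^\top]$ vanish because the circular complex Gaussian is proper, while the two \emph{proper} terms reduce via $\expect[zz^*]=\delta^2 I_m$ to scalar multiples of $(\partial_k x)^*(\partial_j x)$. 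Collecting the vectors $\partial_k x$ as columns of $G$ then delivers the claimed identity $J(\theta)=\delta^{-2}G^*G$.

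The main obstacle is analytic rather than algebraic: both steps of the CR derivation require interchanging the $\theta$-derivative with the $y$-integral. This is the classical Cram\'{e}r-Rao regularity hypothesis on $f(y,\theta)$, verified by dominated convergence and automatic for the smooth Gaussian families considered here. Once this interchange is justified, the whole argument reduces to linear algebra and second-moment bookkeeping.
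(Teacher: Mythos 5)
The paper does not prove this theorem at all --- it is quoted from the literature (Pakrooh et al.), so there is no in-paper argument to compare against. Your first half, the Cram\'{e}r--Rao inequality itself, is the standard and correct route: zero-mean score, the identity $\mathbbm{E}_y[(\hat\theta-\theta)s^*]=I_l$ from differentiating unbiasedness, positive semidefiniteness of the joint second-moment block matrix, and a Schur complement. Modulo the usual regularity hypotheses you mention, that part is fine.

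The Gaussian half, however, has a genuine gap in its last step. With $z=y-x(\theta)$ and a \emph{real} parameter $\theta_k$ you correctly get $\partial_k\log f=\delta^{-2}\bigl((\partial_k x)^*z+z^*\partial_k x\bigr)$, and the two improper terms indeed vanish; but the two surviving terms are \emph{not} both multiples of $(\partial_k x)^*(\partial_j x)$: one is $(\partial_k x)^*\mathbbm{E}[zz^*]\partial_j x=\delta^2(\partial_k x)^*\partial_j x$ while the other is $\mathbbm{E}[z^*\partial_k x\,(\partial_j x)^*z]=\delta^2(\partial_j x)^*\partial_k x$, the complex conjugate of the first. Carried out correctly, your computation yields
\begin{align*}
J_{kj}=\frac{2}{\delta^2}\,\Re\bigl[(\partial_k x)^*(\partial_j x)\bigr],\qquad\text{i.e.}\qquad J(\theta)=\frac{2}{\delta^2}\,\Re\bigl(G^*G\bigr),
\end{align*}
which is not the stated identity $J(\theta)=\delta^{-2}G^*G$ (it differs by the real part and a factor $2$, and coincides only when $G^*G$ happens to be real, and even then not in scale). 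To land on the formula as the paper (and its application, where the weights $\alpha_t$ are complex and $\theta$ lives in $\C^{|Y|(d+1)}$) uses it, you must work in the complex-parameter (Wirtinger) Fisher-information formalism: define the score via derivatives with respect to $\theta$ and $\theta^*$, use that the mean depends holomorphically on the complex parameters, and then $\mathbbm{E}[ss^*]=\delta^{-2}G^*G$ comes out exactly. As written, your real-parameter bookkeeping and the claimed conclusion do not match, so the final line of the second half does not follow from the preceding computation.
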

From a theoretical point of view, this theorem allows to derive a minimal covariance of any algorithm recovering the $\theta$ from measurements $y$. Hence, this can be seen as a lower bound on the condition of the problem itself. This theory was therefore used in the context of univariate super resolution in \cite{DaCosta_22} by assuming that the measured moments are
\begin{align}
    \hat{\mu}(k)=\sum_{t\in Y} \alpha_t \eim{t\cdot k} + \hat\rho(k) \label{eq_noise_normal}
\end{align}
for some normally distributed noise vector $\hat\rho\sim\mathcal{C}\mathcal{N}(0,\delta^2 I)$ and this can be directly generalised from the univariate case $k\in\{-n,\dots,n\}$ to the higher dimensional case $$\mathcal{I}:=\{k\in\Z^d: \|k\|_2\leq n\}, \quad d\geq 1,$$ for some bandlimit parameter $n>0$. Here, the vector of unknown parameters $\theta$ is 
\begin{align*}
\theta =&\,(\alpha,Y)^\top \\
:=& \left[\left(\alpha_t\right)_{t\in Y}, \left(t_1\right)_{t\in Y} \cdots \left(t_d\right)_{t\in Y} \right]^\top\in \C^{|Y|(d+1)}
\end{align*}
and the measurements are $$y=\left(\hat{\mu}(k)\right)_{\{k\in\Z^d: \|k\|_2\leq n\}}\in \C^{|\mathcal{I}|}.$$ Based on this model, we can compute the Fisher information matrix as follows.

\begin{corollary}
\label{Lem_Fisher}
If the moments satisfy the noise model \eqref{eq_noise_normal}, we have the factorisation $J(\alpha,Y)=\delta^{-2} G^* G$ of the Fisher information matrix where $$G= \left( \mathscr{A} ,\tilde{\mathscr{A}}_{1}, \cdots, \tilde{\mathscr{A}}_{d} \right) D_{\alpha}$$ with a \textit{Vandermonde matrix}
\begin{align*}
    \mathscr{A} = \left(\eim{tk}\right)_{k\in \{k\in\Z^d: \|k\|_2\leq n\},\,t\in Y} \in \C^{|\mathcal{I}|\times |Y|},
\end{align*}
matrices $\tilde{\mathscr{A}}_{s}$, $s=1,\dots,d$, with
\begin{align*}
    \scalebox{0.9}{$\tilde{\mathscr{A}}_{s}=-2\pi\ii\left( k_s \eim{tk}\right)_{k\in \{k\in\Z^d: \|k\|_2\leq n\},\,t\in Y} \in \C^{|\mathcal{I}|\times |Y|}$}
\end{align*}
and the diagonal matrix
\begin{align*}
     D_{\alpha} 
    :=&\,\scalebox{0.86}{$\diag(\underbrace{1,\dots,1}_{|Y|\text{ times}},\underbrace{\alpha_{t_1}, \dots, \alpha_{t_{|Y|}},\dots, \alpha_{t_{|1|}}, \dots, \alpha_{t_{|Y|}}}_{\text{repeat weight vector } d\text{ times}})$} \\
    \in&\,\C^{|Y|(d+1)\times |Y|(d+1)}.
\end{align*}
The matrices $\tilde{\mathscr{A}}_{s}$ can be seen as a variant of a \textit{confluent Vandermonde matrix} (see \cite{Gautschi_62}).
\end{corollary}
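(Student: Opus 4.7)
The statement is an immediate consequence of the second part of \cref{thm_Fisher}, so the plan is to verify that the Jacobian of the mean $x(\theta) = \bigl(\sum_{t\in Y} \alpha_t \eim{tk}\bigr)_{k\in\mathcal{I}}$ with respect to $\theta=(\alpha,Y)^\top$ factors in the claimed form $G=(\mathscr{A},\tilde{\mathscr{A}}_1,\dots,\tilde{\mathscr{A}}_d)\,D_\alpha$. The noise model \eqref{eq_noise_normal} directly gives $y\sim \mathcal{CN}(x(\theta),\delta^2 I)$ with this particular $x(\theta)$, reducing everything to a (slightly tedious) bookkeeping of partial derivatives.

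The main step is to compute the Jacobian block by block, following the ordering of $\theta$ chosen in the corollary: first the weight block $(\alpha_t)_{t\in Y}$, then the $d$ coordinate blocks $(t_s)_{t\in Y}$ for $s=1,\dots,d$. Differentiating $x(\theta)$ with respect to a weight $\alpha_t$ produces the column $(\eim{tk})_{k\in\mathcal{I}}$, and collecting these columns for $t\in Y$ yields precisely $\mathscr{A}$. Differentiating with respect to the $s$-th coordinate $t_s$ of a node $t\in Y$ gives, by the chain rule applied to $\eim{t\cdot k}=\prod_{s'}\eim{t_{s'} k_{s'}}$, the column $\alpha_t \cdot (-2\pi\ii k_s \eim{tk})_{k\in\mathcal{I}}$, which is $\alpha_t$ times the corresponding column of $\tilde{\mathscr{A}}_s$.

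Finally, I would absorb the node-dependent factor $\alpha_t$ appearing in each coordinate block into a right-multiplicative diagonal matrix. Its first $|Y|$ diagonal entries are $1$ (no scaling arises in the weight derivatives) and the remaining $d\cdot |Y|$ entries repeat the weight vector $d$ times, once per coordinate block; this matches exactly the matrix $D_\alpha$ in the statement. Consequently, $G=(\mathscr{A},\tilde{\mathscr{A}}_1,\dots,\tilde{\mathscr{A}}_d)\,D_\alpha$, and \cref{thm_Fisher} yields $J(\alpha,Y)=\delta^{-2} G^* G$. I do not expect any genuine obstacle here, since the computation is purely mechanical; the only point that needs care is to keep the column ordering of $\mathscr{A}$ and the $\tilde{\mathscr{A}}_s$ consistent with the entry ordering of $D_\alpha$ so that the weights $\alpha_{t_j}$ are attached to the correct columns.
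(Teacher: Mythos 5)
Your proposal is correct and follows exactly the route of the paper's proof: apply the Gaussian case of \cref{thm_Fisher} to the mean $x(\theta)=\bigl(\sum_{t\in Y}\alpha_t \eim{tk}\bigr)_{k\in\mathcal I}$, note that differentiating with respect to the weights yields the columns of $\mathscr{A}$ while differentiating with respect to the $s$th node coordinates yields $\alpha_t$ times the columns of $\tilde{\mathscr{A}}_s$, and absorb the weight factors into $D_\alpha$. The paper states this only as a brief sketch (citing the $d=1$ case of Ferreira Da Costa--Mitra), so your more explicit block-by-block bookkeeping is simply a fleshed-out version of the same argument.
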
 

\begin{proof}
The univariate case $d=1$ was given in \cite{DaCosta_22} and the higher dimensional result follows from \cref{thm_Fisher} by differentiating \eqref{eq_noise_normal} with respect to the parameters. The derivative with respect to the weights gives the Vandermonde matrix $\mathscr{A}$ while the partial derivative with respect to the $s$th component of every node gives the confluent Vandermonde matrix $\tilde{\mathscr{A}}_{s}$.
\end{proof}
As the inverse of the Fisher information matrix is then a lower bound for the covariance of each unbiased estimator, it is natural to define the condition of super resolution through the size of $J(\theta)^{-1}$ and the problem is considered to be ill-conditioned if $\|J(\theta)^{-1}\|$ becomes large or equivalently $\lambda_{\min}(J(\theta))$ is very small. Hence, one is interested to find lower bounds on the minimal eigenvalue of $J(\theta)$ in order to establish well-conditionedness. This approach was introduced in \cite{DaCosta_22} by defining the transition between good and ill-conditionedness as follows.\footnote{In \cite{DaCosta_22}, the authors use the term stability instead of condition. Since we distinguish between the condition of a problem and the stability of an algorithm (cf.\,\cite{Buergisser_13}), we proceed by using the term condition.}
\begin{definition}(Condition via CR, generalisation of \cite[Def.\,1]{DaCosta_22}) \label{Def_condition_CR}
SR is said to be \textit{well-conditioned} for some $\tilde q>0$ if for all $n$ and parameter configurations with separation $n \cdot \sep\,Y \geq \tilde q$ and some minimal absolute value of all weights $\alpha_{\min}>0$ there exists a constant $c_{\tilde{q},\alpha_{\min}}$ independent of $n$ such that
\begin{align*}
    n^{-d} \lambda_{\min}(J(\theta)) \geq \delta^{-2} c_{\tilde{q},\alpha_{\min}}.
\end{align*}
\end{definition}
Due to \cref{Lem_Fisher}, one directly finds under the natural assumption $|\mathcal{I}|\geq (d+1)|Y|$ that
\begin{align}
    \lambda_{\min}(J(\theta)) &= \delta^{-2} \sigma_{\min}^2(G) \nonumber\\
    &\geq \scalebox{0.87}{$\frac{\min(1,\alpha_{\min}^2)}{\delta^2} \sigma_{\min}^2 \left(\mathscr{A} ,\tilde{\mathscr{A}}_{1}, \cdots, \tilde{\mathscr{A}}_{d} \right)$} \label{eq:smallest_sing_arises}
\end{align}
with equality if all weights are equal to one.\footnote{In particular, we remark at this point that this analysis separates the dependency of the condition on the weights from the influence of the nodes. Moreover, the assumption $|\mathcal{I}|\geq (d+1)|Y|$ is natural because one should have an overdetermined problem with more data than unknown parameters.}
Consequently, the problems boils down to an estimate on the smallest singular value of a block matrix where each block consists of a Vandermonde matrix or of a confluent Vandermonde matrix. While there have been many attempts to analyse the smallest singular value of Vandermonde matrices, see \cite{Nagel_20} for an overview, Ferreira Da Costa and Mitra \cite{DaCosta_22} observed already for the one dimensional case that this can be done by using a so-called \textit{admissible function}, cf.\,\cref{Subsec:admissible}. Whereas they utilised a variant of the Beurling-Selberg minorant for this, we apply the function $\psi$ from \cref{lemma_psi_highD_ball} having optimally small support.

\begin{proposition}(Conditioning of partially confluent block Vandermonde matrix, generalisation of \cite[Prop.\,6]{DaCosta_22}) Assume that $n$ and the separation $q$ satisfy $nq=\sqrt{1+\tau}\frac{j_{d/2}}{\pi}$ for some $\tau>0$. Then, we have
\begin{align*}
    \sigma_{\min}^2 \left(\mathscr{A} ,\tilde{\mathscr{A}}_{1}, \cdots, \tilde{\mathscr{A}}_{d} \right) \geq c_{d,\tau} n^d
\end{align*}
for some constant $c_{d,\tau}>0$. \label{Prop_condition_block_Vandermonde}
\end{proposition}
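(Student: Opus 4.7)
The plan is to use the admissible minorant $\psi$ from \cref{lemma_psi_highD_ball} to construct a positive semidefinite matrix $M$ with $M \preceq B^*B$ in the Loewner order, where $B := (\mathscr{A}, \tilde{\mathscr{A}}_1, \ldots, \tilde{\mathscr{A}}_d)$, and then to bound $\lambda_{\min}(M) \geq c_{d,\tau}\, n^d$ by a block Gershgorin argument.

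First I would write $B^* B = \sum_{k \in \mathcal{I}} b_k b_k^*$, where $b_k \in \C^{(d+1)|Y|}$ is the $k$-th row of $B$. For any non-negative $\hat\psi : \Z^d \to \R$ satisfying $\hat\psi \leq \ind_{\mathcal{I}}$, the matrix $M := \sum_{k \in \Z^d} \hat\psi(k)\, b_k b_k^*$ then obeys $0 \preceq M \preceq B^*B$, and a direct expansion identifies its entries with $\psi$ and its first- and second-order partial derivatives evaluated at the node differences $t - t'$, where $\psi(s) := \sum_k \hat\psi(k)\, e^{2\pi \ii k s}$.

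I would then take $\psi$ to be the radial minorant from \cref{lemma_psi_highD_ball}, so that $\hat\psi \leq \ind_{\mathcal{I}}$ and $\psi(s) \leq 0$ as soon as $\|s\|_2 \geq j_{d/2}/(\pi n)$. Under the hypothesis $nq = \sqrt{1+\tau}\, j_{d/2}/\pi$ this threshold lies strictly inside $q$, so $\psi(t-t') \leq 0$ for all distinct $t, t' \in Y$. Radiality forces $\nabla \psi(0) = 0$ and $\Hess\,\psi(0) = \rho''(0)\, I_d$ (writing $\psi(s) = \rho(\|s\|_2)$), hence each diagonal block of $M$ at $t = t'$ is diagonal with entries $\psi(0) \gtrsim n^d$ and $-\rho''(0) \gtrsim n^{d+2}$, the scalings coming from $\hat\psi$ being supported in the ball of radius $n$.

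The final step is the row-wise estimate $M_{ii} - \sum_{j \neq i} |M_{ij}| \geq c_{d,\tau}\, n^d$, which via Gershgorin yields $\lambda_{\min}(M) \geq c_{d,\tau}\, n^d$. The off-diagonal terms contribute $|\psi(t-t')|$, $|\nabla \psi(t-t')|$ and $|\Hess\,\psi(t-t')|$ summed over $t' \in Y \setminus \{t\}$, which I would control by combining the explicit form of $\psi$ from \cite{Ho_23b} with a packing bound of $O(j^{d-1})$ nodes per annulus of width $q$ around $t$. This last step is the main obstacle: since $\psi$ is only bandlimited, its spatial decay is merely algebraic (of order $\|s\|_2^{-(d+1)/2}$ via Bessel asymptotics), and one must verify that the resulting aggregate stays strictly below $\psi(0)$ with a gap of size $c_{d,\tau}\, n^d$ -- a gap that exists only because $\tau > 0$ and that becomes delicate in high dimensions, where nodes per annulus proliferate fastest.
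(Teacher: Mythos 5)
Your overall architecture (replace the indicator of $\mathcal{I}$ by a weight $\hat\psi$, identify the resulting matrix $M\preceq B^*B$ with values of $\psi$ and its first and second partial derivatives at node differences, and exploit radiality so that $\nabla\psi(0)=0$ and $\Hess\psi(0)$ is a multiple of the identity) is the same skeleton as the paper's proof, and the diagonal scalings $\psi_{\tau,n}(0)\sim n^d$, $-\partial_s^2\psi_{\tau,n}(0)\sim n^{d+2}$ are right. But there is a genuine gap, and it comes from misreading which domain carries the compact support in \cref{lemma_psi_highD_ball}. The admissible $\psi_\tau$ there is compactly supported \emph{in space}, with $\supp\psi_\tau=B_{q_\tau}(0)$, $q_\tau=\sqrt{1+\tau}\,j_{d/2,1}/\pi$, and it is its Fourier transform $\hat\psi_\tau$ that minorizes the indicator of the ball (nonnegative inside, $\le 0$ outside). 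After rescaling, $\psi_{\tau,n}$ is supported in the ball of radius exactly $q$, so under the separation hypothesis \emph{every} off-diagonal entry $\psi_{\tau,n}(t-t'+\ell)$, and likewise every derivative of $\psi_{\tau,n}$ at these points, vanishes identically via Poisson summation; this is precisely why the hypothesis is $nq=\sqrt{1+\tau}\,j_{d/2,1}/\pi$. Hence $M$ is (block-)diagonal, no Gershgorin or packing argument is needed, and the bound follows immediately from $\psi_{\tau,n}(0)$, the estimate \eqref{eq:low_bound_radial} for the second derivative, and \cref{Lem_derivatives_eval}.

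Your version instead takes $\hat\psi\ge 0$ supported in $\mathcal{I}$, which forces $\psi$ to be bandlimited with only algebraic spatial decay, and then the step you yourself flag as the main obstacle does not close: with decay of order $\|s\|_2^{-(d+1)/2}$ and $O(j^{d-1})$ nodes in the $j$-th annulus of width $q$, the row sums behave like $\sum_j j^{(d-3)/2}$, which diverges already for $d=1$, so the Gershgorin gap of size $c_{d,\tau}n^d$ cannot be established this way. Relatedly, your claim that ``$\psi(s)\le 0$ as soon as $\|s\|_2\ge j_{d/2}/(\pi n)$'' does not match the lemma: $\psi_\tau\ge 0$ everywhere and is simply zero outside radius $q$; the sign condition lives on the Fourier side. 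The fix is to drop the requirement $\hat\psi\ge 0$ outside $\mathcal{I}$ (negativity there only helps the lower bound on the quadratic form, since it multiplies $|\sum_s\hat\mu_s(k)|^2\ge 0$), which is exactly what permits the compactly supported spatial minorant and makes the off-diagonal terms vanish exactly rather than merely small.
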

\begin{proof}
    See \cref{Subsec:proof}.
\end{proof}

As a corollary of \cref{Prop_condition_block_Vandermonde}, we obtain an argument for using the Rayleigh limit $\frac{j_{d/2,1}}{\pi n}$ in order to describe the condition of super resolution. 

\begin{corollary} \label{Cor_condition_CR}
Let $d\in\N$. For all $\tilde q>\frac{j_{d/2,1}}{\pi}$, the super resolution problem is well conditioned in the sense of \cref{Def_condition_CR}.
\end{corollary}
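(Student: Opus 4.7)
The plan is to chain the quantitative bounds that have already been set up: the factorisation in \cref{Lem_Fisher}, the inequality \eqref{eq:smallest_sing_arises} separating the weight from the node contribution, and the block-Vandermonde estimate in \cref{Prop_condition_block_Vandermonde}. The only nontrivial step is translating the assumption $n \cdot \sep Y \geq \tilde q$ into a parameter $\tau > 0$ to which \cref{Prop_condition_block_Vandermonde} can be applied uniformly in $n$.

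Concretely, I would start by fixing $\tilde q > j_{d/2,1}/\pi$ and defining
\begin{align*}
    \tau_0 \;:=\; \Bigl(\tfrac{\tilde q\,\pi}{j_{d/2,1}}\Bigr)^{2} - 1 \;>\; 0,
\end{align*}
so that $\tilde q = \sqrt{1+\tau_0}\,j_{d/2,1}/\pi$. Then for any $n$ and any node configuration $Y$ with $q := \sep Y$ satisfying $n q \geq \tilde q$, I set $\tau_n := (nq\,\pi/j_{d/2,1})^2 - 1 \geq \tau_0 > 0$, which fulfils exactly the hypothesis $n q = \sqrt{1+\tau_n}\,j_{d/2,1}/\pi$ of \cref{Prop_condition_block_Vandermonde}. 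Applying that proposition yields
\begin{align*}
    \sigma_{\min}^2\!\bigl(\mathscr{A},\tilde{\mathscr{A}}_1,\dots,\tilde{\mathscr{A}}_d\bigr) \;\geq\; c_{d,\tau_n}\, n^d,
\end{align*}
and I would observe that the constant $c_{d,\tau}$ produced by the proof of \cref{Prop_condition_block_Vandermonde} is monotone non-decreasing in $\tau$ (larger separation can only help), so that $c_{d,\tau_n} \geq c_{d,\tau_0}$.

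Plugging this into \eqref{eq:smallest_sing_arises} gives
\begin{align*}
    \lambda_{\min}(J(\theta)) \;\geq\; \frac{\min(1,\alpha_{\min}^2)}{\delta^2}\, c_{d,\tau_0}\, n^d,
\end{align*}
so dividing by $n^d$ and setting $c_{\tilde q,\alpha_{\min}} := \min(1,\alpha_{\min}^2)\, c_{d,\tau_0}$, which depends only on $\tilde q$, $d$ and $\alpha_{\min}$ but not on $n$, yields exactly the inequality required by \cref{Def_condition_CR}. The main obstacle is the monotonicity argument for $c_{d,\tau}$; if that is not immediate from the construction in \cref{Subsec:proof}, I would instead argue directly by replacing $\tilde q$ in the definition of $\tau_0$ by any fixed $\tilde q' \in (j_{d/2,1}/\pi,\tilde q)$ and using the same $\tau_0$-value throughout, which still captures every admissible $(n,Y)$ and keeps the constant $c_{d,\tau_0}$ fixed.
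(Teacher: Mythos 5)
Your overall chain---\cref{Lem_Fisher}, \eqref{eq:smallest_sing_arises}, \cref{Prop_condition_block_Vandermonde}---is exactly the one the paper invokes (its own proof is a one-liner), and your attention to uniformity in $n$ and in the configuration is well placed. However, your primary argument fails at precisely the point you flagged: the constant $c_{d,\tau}$ produced in \cref{Subsec:proof} is \emph{not} monotone non-decreasing in $\tau$. It is a minimum of two quantities coming from the dilated minorant $\psi_\tau$, namely $\psi_{\tau}(0)/\hat\psi_{\tau}(0)$ and a term proportional to $\tau(1+\tau)^{-d/2-2}$; since $\psi_\tau$ is built by stretching a fixed bump by the factor $\sqrt{1+\tau}$, both quantities decay to $0$ as $\tau\to\infty$. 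Consequently, applying \cref{Prop_condition_block_Vandermonde} per configuration with the configuration-dependent $\tau_n=(nq\pi/j_{d/2,1})^2-1$ does not yield a constant that is uniform over all admissible $(n,Y)$ (on the torus one can have $\tau_n\sim n^2$, for which $c_{d,\tau_n}n^d$ is not bounded below by a fixed multiple of $n^d$), so the first half of your proposal does not close the argument.

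Your fallback is the correct route and is, in effect, what the paper's proof tacitly does: fix one $\tau_0>0$ with $\sqrt{1+\tau_0}\,j_{d/2,1}/\pi\le\tilde q$ and use the single minorant $\psi_{\tau_0,n}$ for \emph{every} configuration with $n\,\sep Y\ge\tilde q$. Be aware, though, that this is not an application of \cref{Prop_condition_block_Vandermonde} as literally stated, whose hypothesis is the exact equality $nq=\sqrt{1+\tau}\,j_{d/2,1}/\pi$; nor can you argue via monotonicity of $\sigma_{\min}$ in the separation, which is not available. What you must say is that the proof in \cref{Subsec:proof} uses the separation only through the inclusion $\supp\psi_{\tau_0,n}\subseteq B_{q}(0)$, i.e.\ through $nq\ge\sqrt{1+\tau_0}\,j_{d/2,1}/\pi$, which makes the off-diagonal terms in the Poisson summation vanish; this inequality holds for all admissible $(n,Y)$, so the same constant $c_{d,\tau_0}$ serves throughout. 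With that one observation added, your computation $c_{\tilde q,\alpha_{\min}}=\min(1,\alpha_{\min}^2)\,c_{d,\tau_0}$ gives exactly the bound demanded by \cref{Def_condition_CR}.
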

\begin{proof}
This follows directly from \cref{Def_condition_CR}, \cref{Prop_condition_block_Vandermonde} and \eqref{eq:smallest_sing_arises}.
\end{proof}

In the univariate case, the sufficient condition from \cref{Cor_condition_CR} for well-conditioned\-ness reads $qn=\tilde q>1$ and this was already conjectured in \cite{DaCosta_22} where this conjecture was formulated in an asymptotically equivalent way as $q(2n+1)>2$. Moreover, \cite[Fig.\,2]{DaCosta_22} gives at least numerical evidence that $\tilde q>1$ is also necessary in the univariate situation. An approach to make this more precise by estimating the smallest singular value of $\left(\mathscr{A} ,\tilde{\mathscr{A}}_{1}, \cdots, \tilde{\mathscr{A}}_{d} \right)$ from above can be done by using results on $\sigma_{\min}(\mathscr{A})$. In fact, choosing $u=\left(u_0^\top, 0, \dots, 0\right)^\top\in\C^{|Y|(d+1)}$ where $u_0$ is the normalised right singular vector corresponding to the smallest singular value of $\mathscr{A}$ and $\alpha=\left(1,\dots,1\right)^\top\in\C^{|Y|}$ leads to
\begin{align}
    \delta^2\lambda_{\min}(J(\alpha,Y)) &= \sigma_{\min}^2 \left(\mathscr{A} ,\tilde{\mathscr{A}}_{1}, \cdots, \tilde{\mathscr{A}}_{d} \right) \nonumber\\
    &\leq \|\left(\mathscr{A} ,\tilde{\mathscr{A}}_{1}, \cdots, \tilde{\mathscr{A}}_{d} \right) u \|_2^2 \nonumber\\
    &=  \sigma_{\min}^2(\mathscr{A}). \label{eq_upper_bound_FIM}
\end{align}

\begin{figure}[ht]
    \centering
    \includegraphics[width=0.6\columnwidth]{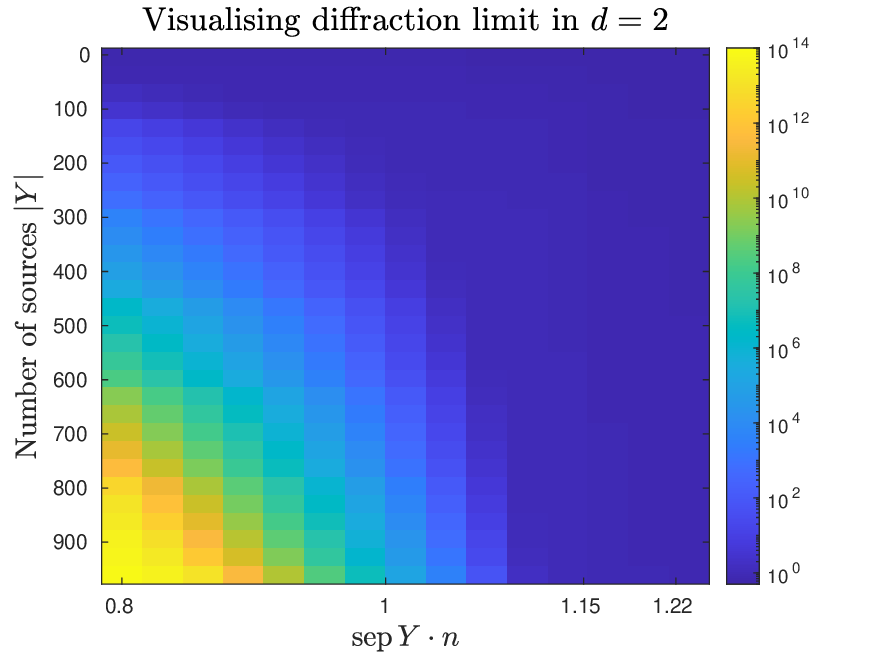}
    \caption[Bivariate diffraction limit]{Visualisation of the bivariate diffraction limit. We place the node set $Y$ on (a subset of) the two hexagonal lattices from \cite[Lem.\,2.1]{Chen_21} while varying the separation $\sep\,Y$ and the number of nodes $|Y|$ for fixed $n=40$. For each selection of $\sep\,Y$ and $|Y|$ we compute $n\cdot \sigma_{\min} \left(\mathscr{A} ,\tilde{\mathscr{A}}_{1}, \cdots, \tilde{\mathscr{A}}_{d} \right)^{-1}$ as a proxy for the condition of the super resolution problem.}
    \label{fig:condition}
\end{figure} 
 Even if the smallest singular values of Vandermonde matrices are well-studied, e.g. see \cite{Nagel_20} and the references therein, upper bounds on the smallest singular value for the case of ill-separated nodes in higher dimensions are difficult in general (cf.\,\cite[Subsec.\,3.4.4]{Nagel_20}). Nevertheless, the analysis from \eqref{eq_upper_bound_FIM} together with \cite[Thm.\,3.1]{Moitra_15} and \cite{Chen_21} shows that the super resolution problem cannot be well conditioned in the sense of \cref{Def_condition_CR} for $\tilde q<1$ and $\tilde{q}<\sqrt{\frac{4}{3}}$ in $d=1$ or $d=2$ respectively. 

The formulation of the condition in terms of singular values of certain matrices allows to compute this condition for visualisation in \cref{fig:condition}.\footnote{See \url{https://github.com/MHockmann/Dissertation} for the implementation of this computation.} In this numerical example, we see that the proxy $n\cdot \sigma_{\min} \left(\mathscr{A} ,\tilde{\mathscr{A}}_{1}, \cdots, \tilde{\mathscr{A}}_{d} \right)^{-1}$ for the condition number of super resolution can become large if $\sep\,Y\cdot n <\sqrt{\frac43}<\frac{j_{1,1}}{\pi}\approx 1.22$. In particular, the upper bound $\frac{j_{1,1}}{\pi}\approx 1.22$ is an improvement compared to \cite{Chen_20} where $\frac{2j_{0,1}}{\pi}\approx 1.53$
was used to guarantee well-conditionedness of SR. 

\subsection{Admissible functions} \label{Subsec:admissible}

We want to use a function $\psi$ with various properties to apply Poisson's summation formula in order to relate Fourier coefficients of a discrete measure $\mu$ to its parameters in real space. As we need a minorant in Fourier domain, we are interested in functions $\psi$ such that their Fourier transform $\hat{\psi}$ is a minorant to the indicator function of the Euclidean unit ball.\footnote{The Fourier transform of an integrable function $\psi$ is defined as $\hat\psi(v)=\int_{\R} \psi(x) \eim{vx} \diff x$.} Together with various other assumptions, we call such a function \textit{admissible}. Beyond the condition $\psi(0)>0$ used in \cite{Kunis_17}, we additionally require similar to \cite{Diederichs_18,Diederichs_19} that this is the global maximum.

\begin{definition}[Admissible function] \label{Def_admissible}
Let $d\in \N$ and $\psi: \R^d\to\R$ be a function $\psi\in L^1(\R^d)$ which
\begin{enumerate}
    \item [(i)] is continuous with compact support, i.e.\,$\psi\in C_c(\R^d)$,
    \item [(ii)] attains its global maximum $\psi(0)>0$ in the origin allowing to find $c_d>0$ such that the bound 
\begin{align*}
    \psi(0)-\psi(x)\ge c_d \|x\|_2^2
\end{align*}
for any $x\in\supp \psi$ holds,
     \item [(iii)] and satisfies $\hat\psi(v)\in\R$ for all $v\in\R^d$ with sign
    \begin{align} \label{eq:minorant}
     \hat\psi(v)
     \begin{cases}
      \ge 0 & \|v\|_2\le 1,\\
      \le 0 & \|v\|_2 > 1.
     \end{cases}
 \end{align}
\end{enumerate}
Then, we call a function $\psi$ fulfilling (i)-(iii) \emph{admissible}.
\end{definition}
 Classical results for functions satisfying (i) and (iii) can be found in \cite{Aubel_16,Vaaler_1985}. Additionally, a univariate function from \cite{Diederichs_18} also meets condition (ii). Based on the idea from \cite{Komornik_05}, we found the following admissible functions in the general multivariate case in \cite{Ho_23b}. For illustration, we display the function and its Fourier transform in \cref{fig:radial_psi}.

\begin{figure}[ht]
    \centering
    \subfloat[]{\includegraphics[width=0.4\columnwidth]{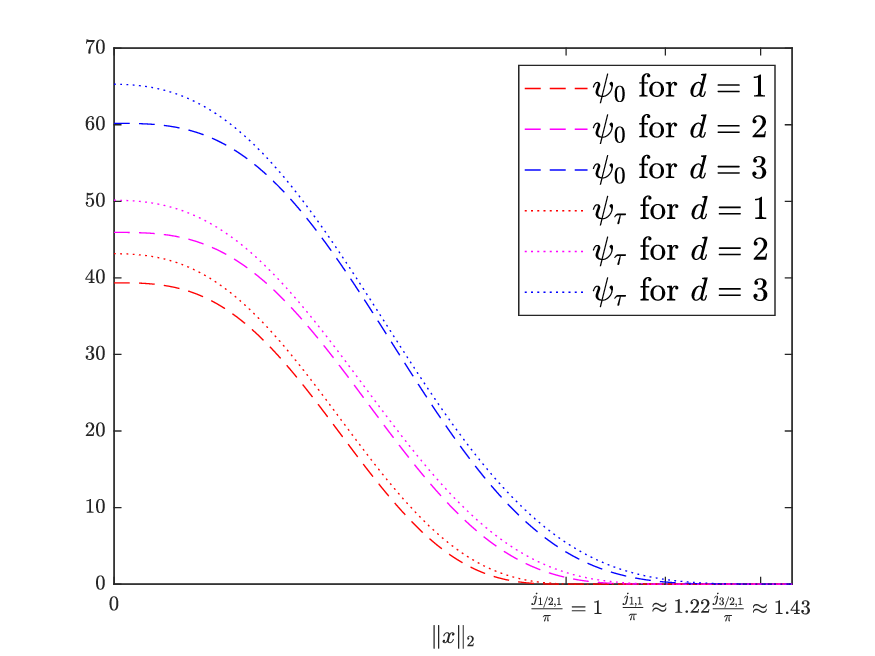}}
    \qquad
    \subfloat[]{\includegraphics[width=0.4\columnwidth]{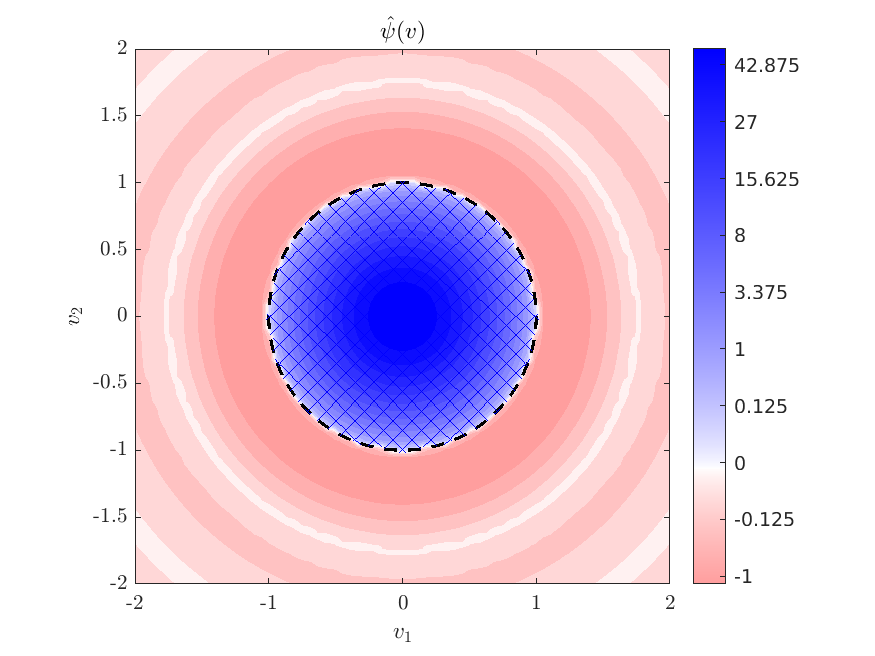}}
    \caption[Admissible functions]{Admissible function $\psi_{\tau}(x)$ for $\tau=0.1$ and non-admissible $\psi_0(x)$ for $d=1,2,3$ as a function of $\|x\|_2$ (a). One can at least imagine from its graph that the second derivative of $\psi_0$ at $x=0$ vanishes such that it is not admissible according to our definition as it does not fulfill (ii). Additionally, we display $\hat\psi_{0.1}$ for $d=2$ and highlight its radial dependency as well as the change of the sign at $\|v\|_2=1$ (b).}
    \label{fig:radial_psi}
\end{figure}

\begin{lemma}(Support on a ball, \cite[Lem.\,3.1]{Ho_23b}) \label{lemma_psi_highD_ball}
For $d\geq 1$ we define $\varphi\colon\R^d\to\R_{\geq 0}$,
\begin{align*}
    \varphi(x) = \scalebox{0.76}{$\left(1-\left(\frac{j_{d/2,1}}{2\pi\|x\|_2}\right)^{d/2-1} \frac{J_{d/2-1}(2\pi\|x\|_2)}{J_{d/2-1}(j_{d/2,1})}\right) \cdot \mathbbm{1}_{\frac{j_{d/2,1}}{2\pi}}(\|x\|_2) $}
\end{align*}
where $J_{\nu}$ denotes the $\nu$th Bessel function and $\mathbbm{1}$ the indicator function. Moreover, let $\triangle=\sum_{s=1}^d \frac{\partial^2}{\partial x_s^2}$ be the Laplace operator and for $\tau\geq 0$ the function $\psi_{\tau}\colon\R^d\to\R_{\geq 0}$,
\begin{align*}
    \psi_{\tau}(x)=\scalebox{0.86}{$\left(\frac{1}{\sqrt{1+\tau}}\right)^d\left[4\pi^2(1+\tau)+\triangle\right] (\varphi*\varphi)\left(\frac{x}{\sqrt{1+\tau}}\right).$}
\end{align*}
Then, $\psi_{\tau}$ with $\tau>0$ is admissible, its support satisfies $\supp \psi_{\tau}=B_{q_{\tau}}(0)$ with $$q_{\tau}:=\sqrt{1+\tau}\frac{j_{d/2,1}}{\pi},$$ and there is a constant $c_d>0$ depending only on $d$ that allows the estimate
\begin{align} \label{eq:low_bound_radial}
    \psi_{\tau}(0)-\psi_{\tau}(x) &\geq c_d \tau  (1+\tau)^{-d/2-1} \|x\|_2^2
\end{align}
for all $x\in\supp\psi_{\tau}$.
\end{lemma}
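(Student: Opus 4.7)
The plan is to exploit the fact that, up to an additive constant, $\varphi$ is an eigenfunction of the Helmholtz operator $\triangle+4\pi^2$ on $B_R$ with $R:=j_{d/2,1}/(2\pi)$. This collapses $[\triangle+4\pi^2]\varphi$ to a multiple of an indicator function, which is what ultimately allows a clean minorant structure for $\psi_\tau$. The argument splits into three pieces: the support of $\psi_\tau$, the sign of $\hat\psi_\tau$, and a quadratic lower bound at the origin (which also yields the maximum property).

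For the support and Fourier side, the parenthesis in the definition of $\varphi$ equals $1$ exactly at $\|x\|_2=R$ (both $(j_{d/2,1}/(2\pi R))^{d/2-1}$ and $J_{d/2-1}(j_{d/2,1})/J_{d/2-1}(j_{d/2,1})$ equal one), so $\varphi\in C_c(\R^d)$ with $\supp\varphi=\overline{B_R}$ and hence $\supp(\varphi*\varphi)\subseteq\overline{B_{2R}}$. Since $4\pi^2(1+\tau)+\triangle$ is local and the dilation $y\mapsto x/\sqrt{1+\tau}$ enlarges balls by a factor $\sqrt{1+\tau}$, we obtain $\supp\psi_\tau=\overline{B_{q_\tau}}$. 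On the Fourier side, $\widehat{\varphi*\varphi}=\hat\varphi^{2}$, the Laplacian multiplier $-4\pi^{2}\|v\|_2^{2}$, and the Fourier scaling rule $\widehat{f(\cdot/a)}(v)=a^{d}\hat f(av)$ combine to give
\[
\hat\psi_\tau(v)\;=\;4\pi^{2}(1+\tau)\bigl(1-\|v\|_2^{2}\bigr)\,\hat\varphi\!\bigl(\sqrt{1+\tau}\,v\bigr)^{2},
\]
which is real and has the sign of $1-\|v\|_2^{2}$; this verifies conditions (i) and (iii) of \cref{Def_admissible}.

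The structural engine of the proof is the identity $[\triangle+4\pi^{2}]\bigl(r^{1-d/2}J_{d/2-1}(2\pi r)\bigr)=0$, which follows directly from Bessel's ODE for $J_{d/2-1}$. Since $\varphi=1-C\,r^{1-d/2}J_{d/2-1}(2\pi r)$ on $B_R$ with the normalising constant $C$ supplied by the parenthesis, this gives
\[
\bigl[\triangle+4\pi^{2}\bigr]\varphi\;=\;4\pi^{2}\,\mathbbm{1}_{B_R}
\]
in the distributional sense on $\R^d$. The choice $R=j_{d/2,1}/(2\pi)$ is essential here: because $j_{d/2,1}$ is a zero of $J_{d/2}$ and $\tfrac{d}{dr}\bigl(r^{1-d/2}J_{d/2-1}(2\pi r)\bigr)=-2\pi r^{1-d/2}J_{d/2}(2\pi r)$, both $\varphi$ and $\varphi'$ vanish at $\|x\|_2=R$, so $\varphi\in C^{1}(\R^d)$ and no surface-measure correction appears in the distributional Laplacian. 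The same derivative formula applied on $(0,R)$ also shows that $\varphi$ is strictly radially decreasing on its support.

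For the quadratic bound, set $F:=\varphi*\varphi$, $u:=x/\sqrt{1+\tau}$, and $g:=4\pi^{2}(1+\tau)F+\triangle F$, so that $\psi_\tau(0)-\psi_\tau(x)=(1+\tau)^{-d/2}[g(0)-g(u)]$. I would split
\[
g(0)-g(u)\;=\;4\pi^{2}\tau\bigl[F(0)-F(u)\bigr]\;+\;\bigl[g_0(0)-g_0(u)\bigr],\qquad g_0:=[\triangle+4\pi^{2}]F,
\]
and handle the two pieces separately. Translation-invariance of $[\triangle+4\pi^{2}]$ and the identity above give $g_0=\varphi*([\triangle+4\pi^{2}]\varphi)=4\pi^{2}\,(\varphi*\mathbbm{1}_{B_R})$, from which $g_0(0)-g_0(u)=4\pi^{2}\int_{B_R\setminus(u+B_R)}\varphi(z)\,dz\geq 0$ as the integral of a non-negative function over the lune $B_R\setminus(u+B_R)$. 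For the first piece, the strict radial monotonicity of $\varphi$ transfers to $F$, and a compactness argument on $\supp F$ combined with non-degeneracy of the Hessian of $F$ at the origin yields $F(0)-F(u)\geq c_d'\|u\|_2^{2}$ for some $c_d'>0$ depending only on $d$. Combining and unwinding the scaling via $\|x\|_2^{2}=(1+\tau)\|u\|_2^{2}$ gives the claimed bound with $c_d=4\pi^{2}c_d'$, and the strict inequality for $x\neq 0$ delivers condition (ii). The main obstacle is exactly the Bessel identity in the previous paragraph: without the algebraic miracle that $[\triangle+4\pi^{2}]\varphi$ collapses to a multiple of an indicator, the residual $g_0(0)-g_0(u)$ in the decomposition would be sign-indefinite, and the minorant property of $\hat\psi_\tau$ alone would not suffice to produce a quadratic lower bound for $\psi_\tau(0)-\psi_\tau(x)$.
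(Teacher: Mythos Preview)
The paper does not prove this lemma; it is quoted from \cite[Lem.\,3.1]{Ho_23b} and used as a black box, so there is no in-paper argument to compare against.

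Your proof is correct and hits the right structural points. The identity $[\triangle+4\pi^{2}]\varphi=4\pi^{2}\,\mathbbm{1}_{B_R}$ in the distributional sense (valid because $\varphi\in C^{1}$ across $\partial B_R$, which in turn is exactly what the choice $R=j_{d/2,1}/(2\pi)$ secures via $J_{d/2}(j_{d/2,1})=0$) is indeed the engine: it gives the Fourier factorisation $\hat\psi_\tau(v)=4\pi^{2}(1+\tau)(1-\|v\|_2^{2})\,\hat\varphi(\sqrt{1+\tau}\,v)^{2}$ for condition~(iii), it collapses $g_0=4\pi^{2}\,\varphi*\mathbbm{1}_{B_R}$ so that the lune integral yields $g_0(0)-g_0(u)\geq 0$, and it isolates the $\tau$-term $4\pi^{2}\tau[F(0)-F(u)]$ as the sole source of the quadratic gain. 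The compactness-plus-Hessian argument for $F(0)-F(u)\geq c_d'\|u\|_2^{2}$ is clean; negative-definiteness of $\Hess F(0)$ follows at once from $\hat F=\hat\varphi^{2}\geq 0$ not vanishing identically. One small wording slip: you write that ``the parenthesis in the definition of $\varphi$ equals $1$ at $\|x\|_2=R$'', but what your computation actually shows (and what is needed) is that the \emph{subtracted} term equals $1$ there, so the parenthesis---and hence $\varphi$---vanishes on $\partial B_R$, which is precisely the continuity you want. You also implicitly use $\varphi\geq 0$ in the lune argument and $\psi_\tau(0)>0$ for condition~(ii); both follow from the strict radial monotonicity you establish together with $\varphi(R)=0$ and $g_0(0)=4\pi^{2}\int_{B_R}\varphi>0$, so it would be worth stating them explicitly.
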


\subsection{Proof of \texorpdfstring{\cref{Prop_condition_block_Vandermonde}}{}} \label{Subsec:proof}
The proof uses the following lemma.
\begin{lemma}[Evaluating derivatives at zero] \label{Lem_derivatives_eval}
Let $\psi: \R^d\to \R$ be a radial function, i.e.\,$\psi(x)=h(\|x\|_2)$ for some univariate function $h$. Assume that $\psi$ and $h$ are twice continuously differentiable and that $\psi$ is maximal in zero. Then, we have $\left(\frac{\partial \psi}{\partial x_s}\right) (0)=0$ for all $s=1,\dots,d$. Moreover, one can find
\begin{align*}
 d \cdot \left(\frac{\partial^2 \psi}{\partial x_s^2} \right) (0)&= \triangle \psi (0)\quad \text{and} \\
 \left(\frac{\partial^2 \psi}{\partial x_s \partial x_{s'}}\right) (0)&=0
\end{align*}
for any $s,s'\in\{1,\dots,d\}, s\neq s'$.
\end{lemma}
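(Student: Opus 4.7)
The three claims are essentially consequences of two facts: $\psi$ has a maximum at the origin, and $\psi$ is rotationally symmetric.

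First, I would dispose of the gradient statement. Since $\psi$ is twice continuously differentiable on $\R^d$ and attains a global maximum at $0$, Fermat's theorem implies that $\nabla \psi(0) = 0$, so in particular $(\partial \psi / \partial x_s)(0) = 0$ for every $s = 1, \dots, d$. (One could also derive this directly from $\partial \psi/\partial x_s = h'(\|x\|_2)\, x_s/\|x\|_2$ together with continuous differentiability, but the Fermat argument is cleaner.)

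For the two statements about the Hessian $H := \Hess \psi(0)$, the key observation is that radiality means $\psi(Rx) = \psi(x)$ for every orthogonal $R \in O(d)$. Differentiating this identity twice and evaluating at $x = 0$ yields the commutation relation $R^\top H R = H$ for all $R \in O(d)$. A matrix that commutes with the full orthogonal group must be a scalar multiple of the identity, so $H = \lambda \, \idmat{d}$ for some $\lambda \in \R$. This immediately gives the off-diagonal claim $(\partial^2 \psi /\partial x_s \partial x_{s'})(0) = 0$ for $s \neq s'$, and reading off the trace yields
\begin{align*}
\triangle \psi(0) \;=\; \trace H \;=\; d \lambda \;=\; d \cdot \left(\frac{\partial^2 \psi}{\partial x_s^2}\right)(0),
\end{align*}
which is the remaining assertion.

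If one prefers to avoid invoking the representation theory of $O(d)$, the same two statements can be obtained directly from two elementary symmetries: invariance of $\psi$ under swapping coordinates $x_s \leftrightarrow x_{s'}$ forces all diagonal entries of $H$ to coincide, while invariance under the reflection $x_s \mapsto -x_s$ forces $(\partial^2 \psi/\partial x_s \partial x_{s'})(0)$ to equal its own negative and thus to vanish. I do not expect any real obstacle here: the hypothesis that $\psi$ is $C^2$ allows one to classically differentiate the invariance identity, and the maximum hypothesis is used only for the first claim. The role of the assumed differentiability of $h$ is merely to ensure that the chain-rule computations make sense at $0$, so that $H$ is well-defined; none of the computations require an explicit formula in terms of $h'$ and $h''$.
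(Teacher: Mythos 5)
Your proof is correct, but it takes a different route from the paper. The paper proves the Hessian statements by an explicit chain-rule computation: it writes out
\begin{align*}
\left(\frac{\partial^2 \psi}{\partial x_s \partial x_{s'}}\right)(x)=\frac{h'(\|x\|_2)}{\|x\|_2}\,\delta_{s,s'}+\left(h''(\|x\|_2)-\frac{h'(\|x\|_2)}{\|x\|_2}\right)\frac{x_s x_{s'}}{\|x\|_2^2},
\end{align*}
and then handles the removable singularity at the origin via an expansion of $h'$ near $0$ together with the bound $|x_s x_{s'}|/\|x\|_2^2\le 1$, concluding that the mixed terms vanish and the diagonal term is independent of $s$. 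You instead exploit the invariance $\psi(Rx)=\psi(x)$ for $R\in O(d)$ (or, more elementarily, just coordinate swaps and sign flips), differentiate the identity twice, and deduce $R^\top \Hess\psi(0)\,R=\Hess\psi(0)$, hence $\Hess\psi(0)=\lambda\,\idmat{d}$, from which both Hessian claims follow by reading off entries and the trace. Your argument buys a cleaner treatment of the origin: it needs only that $\psi$ itself is $C^2$ and radial (the differentiability of $h$ is in fact not used at all in your route, whereas the paper needs it for the explicit formula), and it avoids the limit argument at the singular point of the chain-rule expression. What the paper's computation buys is the explicit form of the second derivatives away from the origin, which makes the structure of the radial Hessian visible, though that formula is not strictly needed elsewhere in the paper. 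Both proofs use the maximum at zero only for the gradient statement, so your remark on that point matches the paper.
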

\begin{proof}
The vanishing gradient follows directly from the extremum in zero. For the second derivatives one can calculate
\begin{align*}
      \scalebox{0.78}{$\left(\frac{\partial^2 \psi}{\partial x_s \partial x_{s'}}\right)(x)=\frac{h'(\|x\|_2)}{\|x\|_2} \delta_{s,s'} + \left((h''(\|x\|_2)-\frac{h'(\|x\|_2)}{\|x\|_2}\right) \frac{x_s x_{s'}}{\|x\|_2^2}.$}
\end{align*}
Because $h'(\|x\|_2)=h'(0)+h''(\|x\|_2)\|x\|_2+o(\|x\|_2)$ as $\|x\|_2\to 0$ and $\frac{|x_s x_{s'}|}{\|x\|_2^2}\leq 1$, the second part vanishes in zero. This yields that the mixed derivatives vanish in zero. Finally, the first term is independent of $s$ if $s=s'$. This gives the remaining part of the statement.
\end{proof}

We can then return to the proof of \cref{Prop_condition_block_Vandermonde}.

\begin{proof}[Proof of \cref{Prop_condition_block_Vandermonde}]
We follow the idea of the proof of \cite[Prop.\,6]{DaCosta_22} and define $$\psi_{\tau,n}(x):=n^d \psi_{\tau}(n\cdot x)$$ with $\psi_{\tau}$ from \cref{lemma_psi_highD_ball} having compact support in $B_q(0)$. By the variational representation of singular values, see \cite[Thm.\,7.3.8]{Horn_13}, we have to find a lower bound on the expression  $\|\left(\mathscr{A} ,\tilde{\mathscr{A}}_{1}, \cdots, \tilde{\mathscr{A}}_{d} \right) u\|_2$ for any normalised vector $u$ with block structure $u=\left(u_0^\top, u_1^\top, \dots, u_{d}^\top\right)^\top\in\C^{|Y|(d+1)}$ where $u_s\in\C^{|Y|}, s=1,\dots,d$. We set
\begin{align*}
    \hat\mu_0(k)&:=\sum_{t\in Y} (u_0)_t\eim{tk} \quad \text{and} \\
    \hat\mu_s(k)&:= -\sum_{t\in Y} 2\pi\ii k_s (u_{s})_t\eim{tk}
\end{align*}
for $s=1,\dots,d$ and $k\in \mathcal{I}$. Now we can compute
\begin{align*}
    &\hspace{0.05cm}\hat{\psi}_{\tau,n}(0) \left\|\left(\mathscr{A} ,\tilde{\mathscr{A}}_{1}, \cdots, \tilde{\mathscr{A}}_{d} \right) u\right\|_2^2 \\
    =&\,\hat{\psi}_{\tau,n}(0) \sum_{k\in\Z^d, \|k\|_2\leq n} \left|\sum_{s=0}^d \hat{\mu}_s(k) \right|^2 \\
    \geq&\,\sum_{k\in\Z^d} \hat{\psi}_{\tau,n}(k) \left|\sum_{s=0}^d \hat{\mu}_s(k) \right|^2 \\
    =&\,\sum_{s, s'=0}^d \sum_{k\in\Z^d} \hat{\psi}_{\tau,n}(k) \hat{\mu}_s(k) \overline{\hat{\mu}_{s'}(k)} \\
    =&\,S_1+ S_2 + S_3 + S_4
\end{align*}
where the decomposition consists of
\begin{align*}
    S_1&= \sum_{k\in\Z^d} \hat{\psi}_{\tau,n}(k) |\hat{\mu}_0(k)|^2, \\
    S_2&= \sum_{s=1}^d 2\Re\left[\sum_{k\in\Z^d} \hat{\psi}_{\tau,n}(k) \hat{\mu}_s(k) \overline{\hat{\mu}_{0}(k)}\right], \\
    S_3&=\sum_{s=1}^d \sum_{\genfrac{}{}{0pt}{}{s'=1}{s'< s}}^d 2\Re\left[\sum_{k\in\Z^d} \hat{\psi}_{\tau,n}(k) \hat{\mu}_s(k) \overline{\hat{\mu}_{s'}(k)}\right] \text{ and} \\
    S_4&=\sum_{s=1}^d \sum_{k\in\Z^d} \hat{\psi}_{\tau,n}(k) |\hat{\mu}_s(k)|^2.
\end{align*}
By Poisson's summation formula and the separation of $Y$ together with the compact support of $\psi_{\tau,n}$ we derive
\begin{align*}
    S_1&=\sum_{t,t'\in Y} (u_0)_t \overline{(u_0)_{t'}} \sum_{k\in\Z^d} \hat{\psi}_{\tau,n}(k) \eip{(t'-t)k} \\
    &= \sum_{t\in Y} |(u_0)_t|^2 \psi_{\tau,n}(0)
\end{align*}
and analogously due to the relation between multiplication with monomials and derivatives under the Fourier transform that $S_4$ equals\footnote{Note that one can estimate $\hat{\psi}_{\tau,n}(k)\in\mathcal{O}(\|k\|_2^{-d-3})$ because $\varphi$ admits a differential equation presented in \cite{Cohn_2003}. Hence, this function allows to apply Poisson summation formula even to its second derivative.} 
\begin{align*}
    &=- \sum_{s=1}^d \sum_{t,t'\in Y}  \sum_{k\in\Z^d} \scalebox{0.78}{$(u_s)_t \overline{(u_s)_{t'}}\hat{\psi}_{\tau,n}(k) (2\pi\ii k_s)^2 \eip{(t'-t)k}$} \\
    &= - \sum_{s=1}^d \sum_{t,t'\in Y} (u_s)_t \overline{(u_s)_{t'}} \sum_{k\in\Z^d} \scalebox{0.78}{$\left(\frac{\partial^2 \psi_{\tau,n}}{\partial x_s^2}\right)\hat{}(k) \eip{(t'-t)k} $}\\
    &= - \sum_{s=1}^d \sum_{t\in Y} |(u_s)_t|^2 \left(\frac{\partial^2 \psi_{\tau,n}}{\partial x_s^2}\right) (0).
\end{align*}
Moreover, one can evaluate the cross terms $S_2$ and $S_3$ by observing
\begin{align*}
    &\hspace{0.05cm}\sum_{k\in\Z^d} \hat{\psi}_{\tau,n}(k) \hat{\mu}_s(k) \overline{\hat{\mu}_{0}(k)} \\
    =&\,\sum_{t,t'} (u_s)_t \overline{(u_0)_{t'}} \sum_{k\in\Z^d} (-2\pi\ii k_s) \hat{\psi}_{\tau,n}(k) \eip{(t'-t)k} \\
    =&\,\sum_{t} (u_s)_t \overline{(u_0)_{t}} \left(\frac{\partial \psi_{\tau,n}}{\partial x_s}\right) (0)
\end{align*}
for $s=1,\dots,d$ and
\begin{align*}
    &\hspace{0.05cm}\sum_{k\in\Z^d} \hat{\psi}_{\tau,n}(k) \hat{\mu}_s(k) \overline{\hat{\mu}_{s'}(k)} \\
    =&\,\sum_{t,t'} (u_s)_t \overline{(u_{s'})_{t'}} \sum_{k\in\Z^d} \scalebox{0.76}{$(-2\pi\ii k_s) (2\pi\ii k_{s'}) \hat{\psi}_{\tau,n}(k) \eip{(t'-t)k}$} \\
    =&\,-\sum_{t} (u_s)_t \overline{(u_{s'})_{t}} \left(\frac{\partial^2 \psi_{\tau,n}}{\partial x_s \partial x_{s'}}\right) (0)
\end{align*}
for $s,s'\in\{1,\dots,d\}, s\neq s'$. By \cref{Lem_derivatives_eval}, we have $S_2=S_3=0$ and
\begin{align*}
    &\hspace{0.05cm}\left\|\left(\mathscr{A} ,\tilde{\mathscr{A}}_{1}, \cdots, \tilde{\mathscr{A}}_{d} \right) u\right\|_2^2 \\
    \geq&\,\min\left(\frac{\psi_{\tau,n}(0)}{\hat\psi_{\tau,n}(0)},-\frac{\left(\frac{\partial^2 \psi_{\tau,n}}{\partial x_s^2}\right) (0)}{\hat\psi_{\tau,n}(0)}\right) \|u\|_2^2 \\
    \geq&\,\min\left(c_{d,\tau}^{(1)},\frac{c_d \tau  (1+\tau)^{-d/2-1} n^2}{4\pi^2(1+\tau)\hat\varphi(0)^2}\right) n^d
\end{align*}
where $c_{d,\tau}^{(1)}$ is the constant from the proof of \cite[Thm.\,3.3]{Ho_23b} and \eqref{eq:low_bound_radial} was used. Defining the constant given by the minimum as $c_{d,\tau}$ completes the proof.
\end{proof}





\bibliographystyle{IEEEtran}
\bibliography{IEEEabrv,references.bib}

\end{document}